\newtheorem{theorem}{Theorem}[section]
\newtheorem*{theorem*}{Theorem}
\newtheorem{lemma}{Lemma}[section]
\theoremstyle{definition}
\newtheorem{definition}{\bf Definition}[section]
\newtheorem*{definition*}{\bf Definition}
\newtheorem{example}{\bf Example}[section]
\newtheorem{remark}{Remark}
\newtheorem*{remark*}{Remark}
\newtheorem*{example*}{\bf Example}
\begin{document}

	\title[Semilinear Poisson equations with asymptotically linear nonlinearities]{Existence result for a system of two semilinear coupled Poisson equations with asymptotically linear nonlinearities}

	\author{Ablanvi Songo}
	\address{D\'{e}partement de math\'{e}matiques} 
	\address{Universit\'{e} de Sherbrooke, Sherbrooke, 2500 boul. de l'université, J1K 2R1, QC, Canada}
	\email{\href{mailto:ablanvi.songo@usherbrooke.ca}{{\textcolor{blue}{ablanvi.songo@usherbrooke.ca}}}}
		
	\noindent
	\keywords{Semilinear coupled Poisson equations, strongly indefinite functionals,  generalized saddle point theorem}
	
	\subjclass[2020]{35A15; 35J61; 58E05}

	\begin{abstract}
	We establish the existence of at least one solution to a system of two semilinear coupled Poisson equations with asymptotically linear nonlinearities, without imposing the Ambrosetti-Rabinowitz condition or any of its refinements. The proof relies on a generalized saddle point theorem due to Colin and the author \cite{CS}.
	\end{abstract}

	\maketitle

	\section{Introduction}
	In this paper, we study the following system of two semilinear coupled Poisson equations 
	\begin{equation}
		\label{eq 1}
		\begin{cases}
			-\Delta u +u = g_1(x)f_1(v),\; x \in \mathbb{R}^N,\\
			-\Delta v +v= g_2(x) f_2(u), \;x \in \mathbb{R}^N, \\
			u,v \in H^{1}(\mathbb{R}^N).
		\end{cases}
	\end{equation}
	
	We are interested in the asymptotically linear case
	\begin{equation}
		\label{eq 2}
		\lim\limits_{|v|\to \infty}\dfrac{f_1(v)}{v}= \lim\limits_{|u|\to \infty}\dfrac{f_2(u)}{u}=0.
	\end{equation}
	
	If, we define in $H^{1}(\mathbb{R}^N)\times H^{1}(\mathbb{R}^N)$ the functional
	\begin{equation}
		\label{eq 3}
		J(u,v)= \int_{\mathbb{R}^N} \Big(\nabla u\nabla v +uv\Big)dx -\int_{\mathbb{R}^N} \Big(g_1(x)F_1(v)+g_2(x)F_2(u)\Big)dx,
	\end{equation}
	where
	\begin{equation*}
		F_1(v)=\int_{0}^{v(x)}f_1(\zeta)d\zeta, \quad F_2(u)=\int_{0}^{u(x)}f_2(\zeta)d\zeta,
	\end{equation*}
	then, it is well known that the first part of the functional $J$ is strongly indefinite operator and under suitable assumptions, $J$ is of class $\mathcal{C}^1$ on $H^{1}(\mathbb{R}^N)\times H^{1}(\mathbb{R}^N)$, and its critical points are weak solutions of $(\ref{eq 1})$ . Precisely, the functional $J$ has the form
	\begin{equation*}
		\dfrac{\|Q(u,v)\|^2}{2}- \dfrac{\|P(u,v)\|^2}{2} -\varphi(x,u,v), 	\end{equation*}
	for every $(u,v)\in \Big( X= Y\oplus Z:= H^{1}(\mathbb{R}^N)\times H^{1}(\mathbb{R}^N), \|\cdot \|\Big)$, where $Y$ and $Z$ are infinite-dimensional subspaces of $X$, $P$ and $Q$ are, respectively, the orthogonal projections of $X$ into $Y$ and $Z$.\\
	
	There is a vast literature devoted to the study of the existence of solutions to semilinear coupled Poisson equations, both in the whole space and in bounded or unbounded domains; see for example \cite{ FDR, SC, CF, ZL, FY, LY} and the references therein. In \cite{SC, CF, ZL, LY}, the following condition on the nonlinearities near $0$ is assumed:
	\begin{equation*}
		\lim\limits_{v\to 0}\dfrac{f_1(v)}{v}= \lim\limits_{u\to 0}\dfrac{f_2(u)}{u}=0
	\end{equation*}
	and the authors obtained nontrivial solutions by means of variational arguments in the spirit of the generalized linking theorem of Kryszewski and Szulkin \cite{KS}. \\
	
	To the best of our knowledge, no results are available in the literature concerning the existence of solutions to semilinear coupled Poisson equations of the form $(\ref{eq 1})$ when the nonlinearities $f_1, f_2$ satisfy only the asymptotically linear condition $(\ref{eq 2})$.\\
	
	The aim of this paper is to prove the existence of at least one solution to problem $(\ref{eq 1})$. The main difficulty arises from the fact that the energy functional $(\ref{eq 3})$ associated with $(\ref{eq 1})$ possesses a strongly indefinite quadratic part. Consequently, the proofs of our main results cannot rely on classical min–max arguments. It is therefore of particular interest to investigate this strongly indefinite case. By applying a generalized saddle point theorem due to the author and Colin \cite{CS}, we establish an existence result for problem $(\ref{eq 1})$.\\
	
	The paper is organized as follows. In Section 2, we recall some classical results and present additional preliminary material needed in the sequel. In Section 3, we state and prove our main result.
	\section{Kryszewski-Szulkin degree theory}
	In this section, we are following the presentation of the degree theory of Kryszewski and Szulkin given in \cite{Wi} by Michel Willem.
	
	Let $Y$ be a real separable Hilbert space endowed with inner product $( \cdot, \cdot )$ and the associated norm $\|\cdot\|$.  
	
	On $Y $ we consider the $\sigma-$topology introduced by Kryszewski and Szulkin; that is, the topology generated by the norm 
	\begin{equation}
		|u|_\sigma := \sum_{k =0}^{\infty} \frac{1}{2^ {k+1}}|(u, e_k)|, \quad u \in Y,
	\end{equation}
	where $(e_k)_{k\ge 0}$ is a total orthonormal sequence in $Y$.\\
	
	\begin{remark}	
		\label{remark 1}
		By the Cauchy-Schwarz inequality, one can show that $|u|_\sigma \le \|u\|$ for every $u \in Y$. Moreover, if $(u_n)$ is a bounded sequence in $Y$ then 
		\begin{equation*}
			u_n \rightharpoonup u \Longleftrightarrow u_n \overset{\sigma}{\rightarrow} u,
		\end{equation*}
		where $\rightharpoonup$ denotes the weak convergence and $ \overset{\sigma}{\rightarrow}$ denotes the convergence in the $\sigma-$topology.
	\end{remark}
	Let $U$ be an open bounded subset of $Y$ such that its closure $\overline{U}$ is $\sigma-$closed.
	\begin{definition}[\cite{Wi}]
		\label{definition 2.1}
		A map $f : \overline{U} \rightarrow Y$ is $\sigma-$admissible (admissible for short) if\\
		(1) $f$ is $\sigma-$continuous, \\
		(2) each point $u \in U$ has a $\sigma-$neighborhood $N_u$ in $Y$ such that $(\operatorname{id}-f)(N_u\cap U)$ is contained in a finite-dimensional subspace of $Y$.
	\end{definition}
	\begin{definition}[\cite{Wi}]
		A map $h :[0,1]\times \overline{U}\rightarrow Y$ is an admissible homotopy if \\
		(1)  $0 \notin h([0,1] \times \partial U)$,\\
		(2) $h$ is $\sigma-$continuous, that is $t_n \rightarrow t$ and $u_n \overset{\sigma}{\rightarrow} u$ implies $h(t_n,u_n) \overset{\sigma}{\rightarrow} h(t,u)$,\\
		(3) $h$ is $\sigma-$locally finite-dimensional. That is, for any $(t,u) \in[0,1]\times U$ there is a neighborhood $N_{(t,u)}$ in the product topology of $[0,1]$ and $(X, \sigma)$ such that $\{ v-h(s,v) \; | \; (s,v) \in N_{(t,u)} \cap ([0,1]\times U)\}$ is contained in a finite-dimensional subspace of $Y$.
	\end{definition} Then, for an admissible map $f$ such that $0 \notin f(\partial U)$   we have (see \cite[ Theorem 6.6]{Wi})
	\begin{equation*}
		deg \Big(h(0,.), U\Big) = deg\Big(h(1,.),U\Big),
	\end{equation*}
	where $deg$ is the topological degree of $f$ (about 0). Such a degree possesses the usual properties; in particular, if $f : \overline{U} \rightarrow Y$ is admissible with $0 \notin f(\partial U)$ and $deg\Big(f, U\Big)\ne 0$, then there exists $u \in U$ such that $f(u)= 0$.\\

	Now, let $X = X^{-}\oplus X^{+}$, where $X^-$ is closed and $X^+=(X^-)^\perp$, be a real separable Hilbert space endowed with the inner product $\langle \cdot, \cdot \rangle$ and the associated norm $\|\cdot\|$. Let $(e_k)_{k\ge 0}$ be an orthogonal basis of $X^-$. We set on $X$, a new norm defined by
	\begin{equation*}
		|u|_\tau := \max \Bigg(\sum_{k =0}^{\infty} \frac{1}{2^ {k+1}}|\langle Pu, e_k\rangle|, \|Qu\| \Bigg) \quad u \in X.
	\end{equation*}
	Here, $P$ and $Q$ denote the orthogonal projections of $X$ onto $X^-$ and $X^+$, respectively; $\tau$ denotes the topology generated by this norm. The topology $\tau$ was introduced by Kryszewski and Szulkin \cite{KS}.
	\begin{remark}	
		\label{remark 2} For every $u\in X$, 
		we have $\|Qu\| \le \|u\|_\tau$ and $|Pu|_{\sigma} \le \|u\|_\tau$. Moreover, if $(u_n)$ is a bounded sequence in $X$ then 
		\begin{equation}
			\label{eq 5}
			u_n \overset{\tau}{\rightarrow} u \Longleftrightarrow	Pu_n \rightharpoonup Pu \quad\text{and}\quad Qu_n \to Qu,
		\end{equation}
		where $\rightharpoonup$ denotes the weak convergence and $ \overset{\tau}{\rightarrow}$ denotes the convergence in the $\tau-$topology.
	\end{remark}
	Let $B_r$ denote the open ball in $X$ of radius $r>0$ about $0$ and let $\partial B_r$ denote its boundary.\\
	\begin{definition}
		Let $J \in \mathcal{C}^1(X,\mathbb{R})$. \\
		\begin{enumerate}
			\item We say that the functional $J$ is $\tau-$upper semi-continuous if  $u_n \overset{\tau}{\rightarrow} u$ implies
			\begin{equation*}
				J(u)\ge \overline{\underset{n\to \infty}{\lim}} J(u_n).
			\end{equation*}
			\item  We say that $\nabla J $ is weakly sequentially continuous if \[u_n \rightharpoonup u\quad \text{implies}\quad \nabla J(u_n) \rightharpoonup \nabla J(u).\]
			\item 
			The functional $J$ is said to satisfy the $(PS)_c$ condition (or the Palais-Smale condition at level $c$) if any sequence $(u_n)\subset X$ such that 
			\begin{equation*}
				J(u_n) \rightarrow c \quad \textit{and} \quad J' (u_n) \rightarrow 0, \quad \text{as}\quad n\to \infty,
			\end{equation*}
			has a convergent subsequence.\\
		\end{enumerate}
	\end{definition}

	We consider the class of $\mathcal{C}^1-$functionals $J : X \rightarrow \mathbb{R}$ such that 
	
	\text{(A)}	$J$ is $\tau-$upper semi-continuous and $\nabla J$ is weakly sequentially continuous.\\

	We recall the following result from \cite{CS} that will play a key role in the proof of our main result; see \cite[Theorem 2.1 ]{CS}.\\
	For $\rho >0$, set 
	\begin{equation}
		\label{eq 6}
		M = \Big\{ u \in X^- \; |\; \|u\| \le \rho \Big\},
	\end{equation}
	Then $M$ is a sub-manifold of $Y$ with boundary $\partial M$.
	
	\begin{theorem}[Generalized Saddle Point Theorem, \cite{CS}]
		\label{theorem 2.1}
		Assume that $J$ satisfies $(A)$, that is, $J$ is $\tau-$upper semi-continuous and $\nabla J$ is weakly sequentially continuous. We suppose that
		\begin{equation}
			\label{eq 7}
			b:= \underset{ u\in X^+}{\inf} \; J(u) > a := \underset{u\in\partial M}{\sup}\; J(u). 
		\end{equation}
	Let $c \in \mathbb{R}$ be characterized as 
		\begin{center}
			$c:= \underset{\gamma \in \Gamma}{\inf} \; \underset{u\in M}{\sup}\; J(\gamma(u))$, \\ 
		where\quad	$\Gamma : = \Big\{\gamma : M \rightarrow X \;\Big|\;\gamma \quad \text{is}\quad \tau-$continuous, $\gamma \big|_{ \partial M} = \operatorname{id}$ and\\ every  $u\in \operatorname{int}(M)$ has a $\tau-$neighborhood $N_u$ in $X$ such that $(\operatorname{id}-\gamma)\Big(N_u\cap \operatorname{int}(M)\Big)$ \\ is contained in a finite-dimensional subspace of $X \Big \}$.
		\end{center}	
	Then, there exists $(u_n)\subset X$ such that 
	\begin{equation*}
		J(u_n) \rightarrow c, \qquad J'(u_n) \rightarrow 0, \quad \text{as}\quad n\to \infty.
	\end{equation*}
	\end{theorem}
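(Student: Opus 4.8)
The plan is to prove the theorem by a deformation argument carried out in the $\tau$-topology, arguing by contradiction: if no $(PS)_c$ sequence existed, one could build a $\tau$-continuous, $\tau$-locally finite-dimensional deformation that pushes the supremum of $J$ over $M$ strictly below $c$ while keeping $\partial M$ pointwise fixed, contradicting the minimax characterization of $c$. Before running this argument I would first record that $\Gamma\neq\emptyset$ and that $c$ is a genuine real number with $a<b\le c<\infty$. Nonemptiness and finiteness of the infimum are easy: $\operatorname{id}_M:M\to X$ belongs to $\Gamma$, so $c\le\sup_M J$, and since on $X^-$ the norm $|\cdot|_\tau$ reduces to $|\cdot|_\sigma$, the ball $M$ is $\tau$-compact, whence $\sup_M J<\infty$ because $J$ is $\tau$-upper semicontinuous by $(A)$.

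The bound $c\ge b$ is the linking part, and I would obtain it from the Kryszewski–Szulkin degree of Section 2. Given $\gamma\in\Gamma$, the map $P\circ\gamma:M\to X^-$ is $\sigma$-admissible: it is $\sigma$-continuous because $\gamma$ is $\tau$-continuous and $P:(X,\tau)\to(X^-,\sigma)$ is continuous (indeed $|Pu|_\sigma\le\|u\|_\tau$), and since $\operatorname{id}|_M=P|_M$ we have $\operatorname{id}-P\circ\gamma=P(\operatorname{id}-\gamma)$, which is locally finite-dimensional on $\operatorname{int}(M)$ because $\operatorname{id}-\gamma$ is. On $\partial M$ one has $\gamma=\operatorname{id}$, hence $P\circ\gamma=\operatorname{id}\neq 0$ there, so the affine homotopy $h(t,u)=(1-t)\,P\gamma(u)+t\,u$ is $\sigma$-admissible and avoids $0$ on $[0,1]\times\partial M$. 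Homotopy invariance and normalization of the degree (\cite[Theorem 6.6]{Wi}) give $\deg(P\circ\gamma,\operatorname{int}(M),0)=\deg(\operatorname{id},\operatorname{int}(M),0)=1\neq 0$, so there is $u_\gamma\in\operatorname{int}(M)$ with $P\gamma(u_\gamma)=0$, i.e. $\gamma(u_\gamma)\in X^+$. Therefore $\sup_{u\in M}J(\gamma(u))\ge J(\gamma(u_\gamma))\ge b$, and taking the infimum over $\Gamma$ yields $c\ge b>a$.

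Next, assume no $(PS)_c$ sequence exists; then I would fix $\varepsilon_0,\delta>0$ with $2\varepsilon_0<c-a$ and $\|J'(u)\|\ge\delta$ on the strip $\{u:|J(u)-c|\le 2\varepsilon_0\}$, and construct on that strip a locally finite-dimensional pseudo-gradient vector field for $J$ (around each point choose a pseudo-gradient vector in a finite-dimensional subspace, then glue by a locally finite partition of unity subordinate to a $\tau$-open refinement of the cover). Integrating the associated cut-off flow produces $\eta:[0,1]\times X\to X$ such that, for a suitable $\varepsilon\in(0,\varepsilon_0)$: (i) $\eta(1,\cdot)$ maps $\{J\le c+\varepsilon\}$ into $\{J\le c-\varepsilon\}$; (ii) $\eta(t,u)=u$ whenever $J(u)\le c-2\varepsilon_0$; (iii) each $\eta(t,\cdot)$ is $\tau$-continuous and $\operatorname{id}-\eta(t,\cdot)$ is $\tau$-locally finite-dimensional. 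This is exactly where hypothesis $(A)$ enters: $\tau$-upper semicontinuity of $J$ controls the sublevel sets in the $\tau$-topology, while weak sequential continuity of $\nabla J$, together with the characterization of $\tau$-convergence in Remark \ref{remark 2}, is what makes the pseudo-gradient field $\tau$-continuous on bounded sets and lets the finite-dimensionality of the field propagate to $\operatorname{id}-\eta$. With the deformation in hand the contradiction is immediate: pick $\gamma\in\Gamma$ with $\sup_{u\in M}J(\gamma(u))<c+\varepsilon$ and set $\tilde\gamma:=\eta(1,\cdot)\circ\gamma$; then $\tilde\gamma$ is $\tau$-continuous, on $\partial M$ we have $J(\gamma(u))=J(u)\le a<c-2\varepsilon_0$ so $\tilde\gamma=\operatorname{id}$ there by (ii), and $\operatorname{id}-\tilde\gamma$ is $\tau$-locally finite-dimensional on $\operatorname{int}(M)$ since both $\operatorname{id}-\gamma$ and $\operatorname{id}-\eta(1,\cdot)$ are; hence $\tilde\gamma\in\Gamma$, yet (i) forces $\sup_{u\in M}J(\tilde\gamma(u))\le c-\varepsilon<c$, contradicting the definition of $c$.

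The main obstacle is the construction in the third paragraph: producing a deformation that simultaneously decreases $J$ quantitatively along the strip, is $\tau$-continuous, and keeps $\operatorname{id}-\eta(t,\cdot)$ valued in (locally) finite-dimensional subspaces, so that post-composition by $\eta(1,\cdot)$ preserves the admissible class $\Gamma$. A classical quantitative deformation lemma does not suffice; one has to reproduce, with care, the pseudo-gradient/flow construction of Kryszewski–Szulkin and Willem adapted to the $\tau$-topology on $X=X^-\oplus X^+$, and it is precisely this adaptation that makes essential use of assumption $(A)$.
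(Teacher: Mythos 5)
The paper itself contains no proof of Theorem \ref{theorem 2.1}; it is quoted from \cite{CS} and used as a black box, so there is no in-paper argument to measure yours against. Judged on its own terms, your outline follows what is almost certainly the intended route. The preliminary observations are correct ($\operatorname{id}_M\in\Gamma$; on $X^-$ the norm $|\cdot|_\tau$ reduces to $|\cdot|_\sigma$, so $M$ is $\tau$-compact and $c<\infty$ by $\tau$-upper semicontinuity), and the degree-theoretic intersection lemma is carried out properly: $P\circ\gamma$ is $\sigma$-admissible, the affine homotopy $h(t,u)=(1-t)P\gamma(u)+tu$ never vanishes on $\partial M$ because $\gamma|_{\partial M}=\operatorname{id}$ there (so $h(t,u)=u$), $\operatorname{id}-h(t,\cdot)=(1-t)P(\operatorname{id}-\gamma)$ inherits local finite-dimensionality, and $\deg(P\circ\gamma,\operatorname{int}(M))=1$ forces $\gamma(M)\cap X^+\neq\emptyset$, whence $c\ge b>a$. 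This is the correct linking argument and the part of the proof that genuinely uses the Kryszewski--Szulkin degree recalled in Section 2.

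The genuine gap is your third paragraph. The $\tau$-deformation with properties (i)--(iii) is the entire technical content of the theorem, and you do not construct it: you state what it must accomplish and where hypothesis $(A)$ ought to enter, which is a description of the difficulty rather than its resolution. In particular, the claim that weak sequential continuity of $\nabla J$ ``makes the pseudo-gradient field $\tau$-continuous'' elides the actual mechanism of the Kryszewski--Szulkin/Willem construction (\cite[Chapter 6]{Wi}): one builds a vector field that is locally Lipschitz in the norm topology and pointwise valued in finite-dimensional subspaces, obtains $\tau$-continuity of the time-$t$ maps from the finite-dimensionality of $\operatorname{id}-\eta(t,\cdot)$ on $\tau$-neighborhoods combined with the $\tau$-closedness of superlevel sets (this is where $\tau$-upper semicontinuity is consumed), and uses weak sequential continuity of $\nabla J$ to make the local pseudo-gradient choices coherent across the cover. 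None of these verifications is routine, and without them the final paragraph (post-composing $\gamma$ by $\eta(1,\cdot)$ to stay in $\Gamma$) has nothing to stand on. If you are permitted to cite the $\tau$-deformation lemma of \cite{CS} or \cite{Wi} as a known result, your argument closes; as a self-contained proof it is incomplete precisely at its crux.
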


	\section{Main result}

	Consider the Hilbert space 
	\begin{equation*}
		H^1(\mathbb{R}^N) := \Big\{u\in L^2(\mathbb{R}^N)\;|\; \nabla u \in L^2(\mathbb{R}^N)\Big\},
	\end{equation*}
	endowed with the inner product 
	\begin{equation*}
		(u,v)_1 = \int_{\mathbb{R}^N} \Big(\nabla u \cdot\nabla v + uv\Big) dx,
	\end{equation*}
	and the associated norm $\|.\|_1$ given by
	\begin{equation*}
		\|u\|_{1}^2 :=	\int_{\mathbb{R}^N} \Big(|\nabla u |^2 + |u|^2\Big) dx.
	\end{equation*}
	
	We denote the product space \[X= H^{1}(\mathbb{R}^N)\times H^{1}(\mathbb{R}^N),\] the Hilbert space endowed with the inner product 
	\begin{equation}
		\langle (u,v), (\phi,\psi)\rangle := \int_{\mathbb{R}^N} \Big( \nabla u \nabla \phi + u\phi \Big) dx + \int_{\mathbb{R}^N} \Big(\nabla v  \nabla \psi +v\psi \Big)dx, \;\; \text{for every}\;\; (\phi, \psi)\in X,
	\end{equation}
	and the corresponding norm $\|\cdot\|$, given by
	\begin{equation*}
		\|(u,v),(u,v)\|^2 = \| u \|_{1}^2 + \|v\|_{1}^2.
	\end{equation*}
	If we define
	\begin{equation*}
		X^- := \Big\{(-v,v) \in X \Big\} \quad \text{and}\quad  X^+:=\Big\{(u,u)\in X\Big\},
	\end{equation*}
	since we can write $(u,v)$ as
	\begin{equation*}
		(u,v) = \dfrac{1}{2}(u+v,u+v)+\dfrac{1}{2}(-v+u, v-u),
	\end{equation*}
	then, $X= X^-\oplus X^+$.\\
	
	Let us denote by $P$ the projection of $X$ onto $X^-$ and by $Q$ the projection of $X$ onto $X^+$. We have (see \cite{ZL} or \cite{SC})
	\begin{equation}
		\label{eq 9}
		\int_{\mathbb{R}^N} \Big( \nabla u \nabla v + uv \Big) dx  =  \dfrac{\|Q(u,v)\|^2}{2} -\dfrac{\|P(u,v)\|^2}{2}.
	\end{equation}
	
	Finally, let us define the functional 
	\begin{eqnarray}
		\label{eq 10}
		J&:& X \rightarrow \mathbb{R}\nonumber\\
		J(u,v) &:=& \int_{\mathbb{R}^N} \Big( \nabla u \nabla v + uv \Big) dx -\int_{\mathbb{R}^N} \Big(g_1(x)F_1(v)+g_2(x)F_2(u)\Big)dx \\
		&=& \dfrac{\|Q(u,v)\|^2}{2} -\dfrac{\|P(u,v)\|^2}{2} - \varphi(x,u,v)\nonumber,
	\end{eqnarray}
	where
	\begin{equation*} 
		\varphi(x,u,v) := \int_{\mathbb{R}^N} \Big(g_1(x)F_1(v)+g_2(x)F_2(u)\Big)dx.
	\end{equation*}
	
	\begin{definition}
		\label{def 3.1}
		We say that $(u,v)$ is a weak solution of problem $(\ref{eq 1})$ if $(u,v)\in X$, and satisfies for any $ (\phi, \psi) \in X$ : 
		\begin{multline*}
		\int_{\mathbb{R}^N} \Big( \nabla u \nabla \psi + u\psi \Big) dx + \int_{\mathbb{R}^N} \Big(\nabla v  \nabla \phi +v\phi \Big)dx -\int_{\mathbb{R}^N} \Big(g_1(x)f_1(v) \psi+g_2(x)f_2(u) \phi\Big)dx =0. \\
	\end{multline*}
	\end{definition}
	
	Throughout this section, $|\cdot|_p$ stands for the $L^p$-norm; $\to$ and $\rightharpoonup$ denote strong and weak convergence, respectively.\\
	
	Our assumptions on $(\ref{eq 1})$ are the following: 
	
	\begin{enumerate}
		\item[$(f)$] The functions $f_1, f_2 \in \mathcal{C}(\mathbb{R}, \mathbb{R})$ satisfy $(\ref{eq 2})$, $tf_1(t)\ge0,\; tf_2(t)\ge 0$ for all $t\in \mathbb{R}$. 
		\item[$(g)$] The functions $g_1, g_2\in L^2(\mathbb{R}^N)\cap  L^{\infty}(\mathbb{R}^N)$, $g_1(x)\ge0$, $g_2(x)\ge 0$ for every $x\in \mathbb{R}^N$.\\
	\end{enumerate}
	\begin{remark}
			We would like to mention that 
		\begin{enumerate}
		\item these conditions coincide with those considered in \cite{LiSh}.
		\item 	since assumption $(f)$ implies that $f_1(0)=0$ and $f_2(0)=0$, then $(u,v)=(0,0)$ is a trivial solution of problem $(\ref{eq 1})$.
		\end{enumerate}
	\end{remark}
	\begin{example}
The set of functions $f_i$ and $g_i$, $i\in \Big\{1, 2 \Big \}$, that satisfy the above conditions $(f)$ and $(g)$ is nonempty. Indeed, define
\[f_1(t)= \dfrac{t}{1+t^2}, \quad f_2(t)=\dfrac{t}{1+t^4}, \qquad \text{for every}\quad t\in \mathbb{R};\]
\[ g_1(x)=g_2(x)= e^{-|x|^2}, \qquad \text{for every}\quad x\in \mathbb{R}^N.\]
Then, the functions $f_i$ and $g_i$, $i\in \Big\{1,2 \Big\}$ satisfy assumptions $(f)$ and $(g)$.
	\end{example}
	
The main result of this paper is as follows:
	
	\begin{theorem}
		\label{theorem 3.1}
		Under assumptions $(f)$ and $(g)$, problem $(\ref{eq 1})$ admits at least one nontrivial solution.
	\end{theorem}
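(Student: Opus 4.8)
The plan is to apply the Generalized Saddle Point Theorem (Theorem~\ref{theorem 2.1}) to the functional $J$ of \eqref{eq 10} on $X=X^-\oplus X^+$. I would first record the two growth bounds forced by $(f)$: since each $f_i$ is continuous with $f_i(t)/t\to0$ as $|t|\to\infty$, for every $\varepsilon>0$ there is $C_\varepsilon>0$ with $|f_i(t)|\le\varepsilon|t|+C_\varepsilon$ and $|F_i(t)|\le\tfrac\varepsilon2 t^2+C_\varepsilon|t|$ for all $t$; moreover $tf_i(t)\ge0$ gives $F_i\ge0$, hence by $(g)$ one has $\varphi\ge0$ on $X$. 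Together with $g_i\in L^2(\mathbb R^N)\cap L^\infty(\mathbb R^N)$ these bounds yield that $\varphi\in\mathcal C^1(X,\mathbb R)$ with the derivative of Definition~\ref{def 3.1}, so $J\in\mathcal C^1(X,\mathbb R)$ and its critical points solve \eqref{eq 1} weakly. To check hypothesis $(A)$: if $(u_n,v_n)\overset{\tau}{\rightarrow}(u,v)$ and $\limsup_n J(u_n,v_n)>-\infty$, then $\|Q(u_n,v_n)\|\le|(u_n,v_n)|_\tau$ is bounded, so $J=\tfrac12\|Q(\cdot)\|^2-\tfrac12\|P(\cdot)\|^2-\varphi$ with $\varphi\ge0$ forces $\|P(u_n,v_n)\|$, hence $(u_n,v_n)$, to be bounded in $X$; by Remark~\ref{remark 2}, $u_n\rightharpoonup u$, $v_n\rightharpoonup v$ in $H^1$ and $Q(u_n,v_n)\to Q(u,v)$, and weak lower semicontinuity of $\|\cdot\|$ with the weak continuity of $\varphi$ gives $J(u,v)\ge\limsup_n J(u_n,v_n)$. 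The weak continuity of $\varphi$, the weak sequential continuity of $\nabla J=Q-P-\nabla\varphi$, and the compactness of $\nabla\varphi:X\to X$ all follow from the same device: split $\mathbb R^N$ into a large ball and its complement, use local compactness of the Sobolev embedding and continuity of $f_i,F_i$ on the ball, and on the complement use the smallness of $\|g_i\|_{L^2}$ there together with the growth bounds and the Sobolev embedding, uniformly in $n$.

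Next I would verify the linking inequality \eqref{eq 7}. On $X^+=\{(w,w)\}$ one has $J(w,w)=\|w\|_1^2-\int_{\mathbb R^N}\big(g_1F_1(w)+g_2F_2(w)\big)\,dx\ge(1-c_1\varepsilon)\|w\|_1^2-c_2C_\varepsilon\|w\|_1$, with $c_1,c_2$ depending only on $|g_i|_2,|g_i|_\infty$; fixing $\varepsilon$ small this is bounded below by a constant $-\beta$, so $b:=\inf_{X^+}J\ge-\beta>-\infty$. On $X^-=\{(-w,w)\}$, $J(-w,w)=-\tfrac12\|(-w,w)\|^2-\varphi\le-\tfrac12\|(-w,w)\|^2$ because $\varphi\ge0$, hence $a:=\sup_{\partial M}J\le-\rho^2/2$. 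Taking $\rho$ with $\rho^2/2>\beta$ gives $a<b$, so \eqref{eq 7} holds; Theorem~\ref{theorem 2.1} then produces $(u_n,v_n)\subset X$ with $J(u_n,v_n)\to c$ and $J'(u_n,v_n)\to0$, and since $\operatorname{id}\in\Gamma$ and $J\le0$ on $M\subset X^-$ one gets $c\le0$, while the linking property gives $c\ge b$, so $c$ is finite.

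The decisive step — the one where asymptotic linearity replaces an Ambrosetti--Rabinowitz condition — is the boundedness of this Palais--Smale sequence. If $t_n:=\|(u_n,v_n)\|\to\infty$, set $w_n=(u_n,v_n)/t_n$; pairing $\nabla J(u_n,v_n)=Q(u_n,v_n)-P(u_n,v_n)-\nabla\varphi(u_n,v_n)$ with $Q(u_n,v_n)$ gives $\|Q(u_n,v_n)\|^2=\langle\nabla J(u_n,v_n),Q(u_n,v_n)\rangle+\langle\nabla\varphi(u_n,v_n),Q(u_n,v_n)\rangle$, where the first term is $o(t_n)$ and the bound $|f_i(t)|\le\varepsilon|t|+C_\varepsilon$ with $|u_n|_2,|v_n|_2\le t_n$ makes the second $\le\varepsilon c_3 t_n^2+C_\varepsilon c_4 t_n$; dividing by $t_n^2$ and letting $n\to\infty$, then $\varepsilon\to0$, yields $\|Qw_n\|\to0$, and pairing with $-P(u_n,v_n)$ gives $\|Pw_n\|\to0$ likewise, so $1=\|Pw_n\|^2+\|Qw_n\|^2\to0$, a contradiction. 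Hence $(u_n,v_n)$ is bounded; passing to $(u_n,v_n)\rightharpoonup(u,v)$ and using that $T:=Q-P$ satisfies $T^2=\operatorname{id}$, one has $(u_n,v_n)=T\big(\nabla\varphi(u_n,v_n)+\nabla J(u_n,v_n)\big)$, and the compactness of $\nabla\varphi$ together with $\nabla J(u_n,v_n)\to0$ forces $(u_n,v_n)\to(u,v)$ strongly; in the limit $\nabla J(u,v)=0$ and $J(u,v)=c$, so $J$ satisfies $(PS)_c$ and $(u,v)$ is a weak solution of \eqref{eq 1}.

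Finally, $(u,v)$ must be shown to be nontrivial. Since $J(0,0)=0$, it is enough to know that $c\ne0$, and because $c\le0$ always, the real task is to prove $c<0$; as $c\ge b=\inf_{X^+}J$, it suffices to exhibit a single point of $X^+$ where $J$ is negative, which is controlled by the behaviour of $f_1,f_2$ near the origin (for instance by $\limsup_{t\to0}f_i(t)/t$ being large enough relative to $|g_i|_\infty$ and the bottom of the spectrum of $-\Delta+1$), or else one rules out by a non-vanishing argument on $\mathbb R^N$ that a $(PS)_0$ sequence can converge to $(0,0)$. I expect this last point — securing $c\ne0$, equivalently excluding the trivial limit — to be the main obstacle of the proof.
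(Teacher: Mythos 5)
Your proposal follows essentially the same route as the paper: verify condition (A), establish the saddle-point geometry via the growth bounds $|f_i(t)|\le\varepsilon|t|+C_\varepsilon$ together with $\varphi\ge 0$, invoke Theorem~\ref{theorem 2.1} to produce a Palais--Smale sequence, prove its boundedness using only the asymptotic linearity \eqref{eq 2}, and pass to the limit. The differences are minor. In the boundedness step you pair $\nabla J(u_n,v_n)$ with $Q(u_n,v_n)$ and $P(u_n,v_n)$ separately and run a normalization argument, whereas the paper pairs $J'(u_n,v_n)$ with $(\overline u_n,\overline v_n)=(Q-P)(u_n,v_n)$ (which is just $(v_n,u_n)$) and absorbs the nonlinear term with the fixed choice $\varepsilon=S/(4|g_i|_\infty)$; both are sound. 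In the last step you upgrade to strong convergence via compactness of $\nabla\varphi$ (which does hold, by the ball/tail splitting you describe, though the tail estimate for the quadratic part needs $\varepsilon\to0$ as well as $R\to\infty$), while the paper settles for weak convergence plus weak sequential continuity of $\nabla J$ to conclude $J'(u,v)=0$. Up to the existence of a critical point, your argument and the paper's are equivalent in substance.

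The genuine issue is exactly the one you flag at the end: nothing in this scheme excludes $(u,v)=(0,0)$. You should know that the paper's own proof does not address this either --- it ends at ``$J'(u,v)=0$, that is, $(u,v)$ is a solution'' and never verifies nontriviality, even though Remark~1 of the paper notes that $(0,0)$ is a trivial solution and the theorem asserts a nontrivial one. Your suspicion that an extra hypothesis near the origin is needed is correct, and in fact the claim cannot be salvaged under $(f)$ and $(g)$ alone: since $(0,0)\in X^+$ one only gets $b\le c\le 0$, and if the $f_i$ are globally Lipschitz with $|g_i|_\infty\,\mathrm{Lip}(f_i)<1$ (for instance $f_1(t)=t/(1+t^2)$, $f_2(t)=t/(1+t^4)$, $g_1=g_2=\tfrac12 e^{-|x|^2}$, which satisfy $(f)$ and $(g)$), then $(u,v)\mapsto\big((-\Delta+1)^{-1}(g_1f_1(v)),(-\Delta+1)^{-1}(g_2f_2(u))\big)$ is a contraction on $L^2\times L^2$ whose unique fixed point is $(0,0)$, so the system has no nontrivial solution at all. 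So the missing step in your proposal is not an oversight on your part but a gap in the theorem as stated; closing it requires an additional assumption (e.g.\ on $\liminf_{t\to0}f_i(t)/t$ or on the size of the $g_i$) that forces $\inf_{X^+}J<0$ or otherwise rules out the trivial limit.
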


	Under assumptions $(f)$ and $(g)$, we have 
	\begin{equation*}
		g_1(x)F_1(v) \in L^1(\mathbb{R}^N)\;\; \text{and}\;\; g_2(x)F_2(u) \in L^1(\mathbb{R}^N), \quad \text{for every}\;\; (u, v) \in X.
	\end{equation*}
	Therefore, the functional $J(u,v)$ given in $(\ref{eq 10})$ is well defined. Furthermore, using variational standard arguments, the functional $J(u,v)$ is of class $\mathcal{C}^1$ in $X$ and 
	\begin{multline}
		\label{eq 11}
		J'(u,v)(\phi, \psi) = \int_{\mathbb{R}^N} \Big( \nabla u \nabla \psi + u\psi \Big) dx + \int_{\mathbb{R}^N} \Big(\nabla v  \nabla \phi +v\phi \Big)dx\\ -\int_{\mathbb{R}^N} \Big(g_1(x)f_1(v) \psi+g_2(x)f_2(u) \phi\Big)dx, \;\: \text{for every}\;\; (\phi, \psi) \in X. 
	\end{multline}
	
	Hence, the weak solutions of problem $(\ref{eq 1})$ are exactly the critical points of $J(u,v)$ in $X$ (see Definition $\ref{def 3.1}$).\\
	
	To prepare the proof of Theorem $\ref{theorem 3.1}$, we first establish several lemmas. \\
	\begin{lemma}
		\label{lem 3.1}
		The functional $J(u,v)$ given in $(\ref{eq 10})$ satisfies $(A)$, that is, $J(u,v)$ is $\tau-$upper semi-continuous and $\nabla J$ is weakly sequentially continuous.
	\end{lemma}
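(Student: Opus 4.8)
\emph{Proof proposal.} The plan is to treat the two halves of $(A)$ separately, after first recording the consequences of hypothesis $(f)$ that will be used throughout. Since $f_1,f_2\in\mathcal C(\mathbb R,\mathbb R)$ and $f_i(t)/t\to 0$ as $|t|\to\infty$, for every $\varepsilon>0$ there is $C_\varepsilon>0$ with $|f_i(t)|\le\varepsilon|t|+C_\varepsilon$ for all $t\in\mathbb R$, hence $0\le F_i(t)\le\frac{\varepsilon}{2}t^2+C_\varepsilon|t|$, the nonnegativity coming from $tf_i(t)\ge0$; together with $g_i\in L^2(\mathbb R^N)\cap L^\infty(\mathbb R^N)$, $g_i\ge0$, this makes $\varphi$ finite and nonnegative on $X$. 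I will also use repeatedly that $w_n=(u_n,v_n)\rightharpoonup w=(u,v)$ in $X$ forces $u_n\rightharpoonup u$ and $v_n\rightharpoonup v$ in $H^1(\mathbb R^N)$, so that $(u_n),(v_n)$ are bounded and, by the Rellich--Kondrachov theorem on the balls $\{|x|<R\}$ combined with a diagonal extraction, some subsequence converges to $u$, resp. $v$, almost everywhere on $\mathbb R^N$.

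\emph{$\tau$-upper semicontinuity.} Let $w_n\overset{\tau}{\to}w$. I would first dispose of unbounded sequences: along any subsequence with $\|w_n\|\to\infty$, the definition of $\|\cdot\|_\tau$ gives $\|Qw_n\|\to\|Qw\|$, hence $\|Pw_n\|\to\infty$, and since $\varphi\ge0$ we get $J(w_n)=\frac12\|Qw_n\|^2-\frac12\|Pw_n\|^2-\varphi(w_n)\to-\infty$; thus it remains to bound $\limsup_n J(w_n)$ for bounded $(w_n)$. For such sequences Remark~\ref{remark 2} yields $Pw_n\rightharpoonup Pw$, $Qw_n\to Qw$ (so $\|Qw_n\|^2\to\|Qw\|^2$ and $\liminf_n\|Pw_n\|^2\ge\|Pw\|^2$) and $w_n\rightharpoonup w$ in $X$; a subsequence-and-Fatou argument, using the almost-everywhere convergence of $u_n,v_n$ and $\varphi\ge0$, then gives $\liminf_n\varphi(w_n)\ge\varphi(w)$, and combining the three facts produces $\limsup_n J(w_n)\le J(w)$.

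\emph{Weak sequential continuity of $\nabla J$.} Let $w_n=(u_n,v_n)\rightharpoonup w=(u,v)$ in $X$; it is enough to show $J'(w_n)(\phi,\psi)\to J'(w)(\phi,\psi)$ for every $(\phi,\psi)\in X$. By \eqref{eq 11} the quadratic part of $J'(w_n)(\phi,\psi)$ equals $\langle(Q-P)w_n,(\phi,\psi)\rangle=\langle w_n,(Q-P)(\phi,\psi)\rangle$, which converges because $Q-P$ is bounded and self-adjoint and $w_n\rightharpoonup w$. For the nonlinear part it suffices to check $\int_{\mathbb R^N}g_2f_2(u_n)\phi\,dx\to\int_{\mathbb R^N}g_2f_2(u)\phi\,dx$ (the $g_1f_1(v_n)\psi$ term being identical). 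The hard part is this last convergence, and I would handle it by splitting $\mathbb R^N=\{|x|<R\}\cup\{|x|\ge R\}$: the bound $|f_2(u_n)-f_2(u)|\le\varepsilon(|u_n|+|u|)+2C_\varepsilon$ with Hölder's inequality, the boundedness of $|u_n|_2$, $g_2\in L^\infty$, $g_2\in L^2$ and $\phi\in L^2$ make $\sup_n\int_{|x|\ge R}g_2|f_2(u_n)-f_2(u)||\phi|\,dx$ arbitrarily small for $R$ large, while on $\{|x|<R\}$ Rellich gives $u_n\to u$ in $L^2$, hence $u_n\to u$ a.e.\ along a subsequence with an $L^2$ dominating function, and dominated convergence yields $\int_{|x|<R}g_2|f_2(u_n)-f_2(u)||\phi|\,dx\to0$; a routine subsequence argument then upgrades this to the whole sequence.

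\emph{Main obstacle.} The genuine difficulty throughout is that $H^1(\mathbb R^N)\hookrightarrow L^2(\mathbb R^N)$ is not compact, so weak convergence of $u_n,v_n$ does not pass directly through the nonlinear integrals. It is circumvented by the global integrability of the potentials: $g_i\in L^2(\mathbb R^N)$ makes the region $\{|x|\ge R\}$ uniformly negligible, while Rellich--Kondrachov restores compactness on $\{|x|<R\}$; the sign conditions $g_i\ge0$, $tf_i(t)\ge0$ are used only to secure $F_i\ge0$, which is what legitimizes the Fatou step in the upper-semicontinuity argument.
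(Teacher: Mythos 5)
Your proposal is correct, and its first half ($\tau$-upper semicontinuity) follows the paper's argument essentially verbatim: strong convergence of the $Q$-components, weak lower semicontinuity of $\|P(\cdot)\|^2$, and Fatou's lemma applied to the nonnegative integrand $g_1F_1(v_n)+g_2F_2(u_n)$ (nonnegativity coming, as you say, from $tf_i(t)\ge 0$ and $g_i\ge 0$). The paper phrases this as $\tau$-closedness of the superlevel sets $\{(u,v)\mid J(u,v)\ge c\}$, which makes boundedness of the sequence automatic from $J(u_n,v_n)\ge c$ and $\varphi\ge 0$, whereas you dispose of unbounded subsequences by hand; the two are equivalent. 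For the weak sequential continuity of $\nabla J$ you take a mildly different, and in one respect more complete, route: the paper tests $J'(u_n,v_n)$ only against $(w,z)\in\mathcal C_c^\infty(\mathbb R^N)\times\mathcal C_c^\infty(\mathbb R^N)$, exploiting the compact Sobolev embedding on the compact support together with dominated convergence, and then concludes $\nabla J(u_n,v_n)\rightharpoonup\nabla J(u,v)$ --- a final step that implicitly relies on the boundedness of $\bigl(\nabla J(u_n,v_n)\bigr)$ in $X$ to upgrade convergence on a dense set of test functions to weak convergence. You instead test against arbitrary $(\phi,\psi)\in X$ and control the region $\{|x|\ge R\}$ directly, using $g_i\in L^2(\mathbb R^N)$ for the constant part of the growth bound and the smallness of $\|\phi\|_{L^2(\{|x|\ge R\})}$ for the linear part, with Rellich--Kondrachov on $\{|x|<R\}$; this yields the conclusion with no density argument, at the price of the slightly more delicate two-parameter ($\varepsilon$, then $R$) tail estimate. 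Both arguments are sound.
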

	\begin{proof}
		
		1. The functional $J$ is  $\tau-$upper semi-continuous :
		for every $c\in \mathbb{R}$, let show that the set
		
		\[\Big\{(u,v)\in X\,|\, J(u,v)\ge c \Big\}\quad \text{is}\quad \tau-\text{closed}.\]
		Let $(u_n,v_n) \subset X$ such that \[(u_n,v_n)\overset{\tau}{\rightarrow} (u,v) \quad \text{in} \quad X \quad \text{and} \quad c\le J(u_n,v_n).\] Then, $(u_n,v_n)$ is bounded. Indeed, by Remark $\ref{remark 2}$, $\Big(\|Q(u_n,v_n)\|\Big)$ is bounded;  since 
		\begin{equation*}
			\varphi(x,u,v) = \int_{\mathbb{R}^N} \Big(g_1(x)F_1(v)+g_2(x)F_2(u)\Big)dx \ge 0,
		\end{equation*} 
		then
		\begin{eqnarray*}
			\|P(u_n,v_n)\|^2 &=& \|Q(u_n,v_n)\|^2 -2 J(u_n,v_n) -2 \varphi(x,u_n,v_n)\\
			&\le& \|Q(u_n,v_n)\|^2 -2c.
		\end{eqnarray*}	
		
		It follows that $\Big(\|P(u_n,v_n)\|\Big)$ is also bounded. 
		
		Thus, there is a subsequence of $(u_n,v_n)$ that we still denote $(u_n, v_n)$ such that  
		\[(u_n,v_n) \rightharpoonup (u,v)\quad \text{in}\quad X.\]
		 Up to a subsequence, we also have \begin{equation*}
			(u_n,v_n)\to (u,v),\;\; \text{almost everywhere in}\;\;\mathbb{R}^N.
		\end{equation*}
		
		Since $g_1(x)F_1(v_n)\ge 0$ and $g_2(x)F_2(u_{n})\ge 0$, then by the Fatou Lemma, we have 
		\begin{equation*}
			\int_{\mathbb{R}^N} g_1(x)F_1(v) dx= \int_{\mathbb{R}^N} \underset{n\rightarrow \infty}{\underline{\lim}}g_1(x)F_1(v_{n})dx  \le \underset{n\rightarrow \infty}{\underline{\lim}} \int_{\mathbb{R}^N} g_1(x)F_1(v_{n})dx.
		\end{equation*}
		In the same way, we have 
		\begin{equation*}
			\int_{\mathbb{R}^N} g_2(x) F_2(u) dx= \int_{\mathbb{R}^N} \underset{n\rightarrow \infty}{\underline{\lim}} g_2(x)F_2(u_{n})dx  \le \underset{n\rightarrow \infty}{\underline{\lim}} \int_{\mathbb{R}^N}g_2(x) F_2(u_{n})dx.
		\end{equation*}
		Since $\|\cdot\|$ is weak lower semi-continuous, we have 
		\begin{equation*}
			\|P(u,v)\|^2 \le \underset{n\rightarrow \infty}{\underline{\lim}} \|P(u_n,v_n)\|^2.
		\end{equation*}
		Moreover, since \[\|Q(u_n,v_n)\|^2 \to \|Q(u,v)\|^2,\quad \text{as}\quad n\to \infty, \] we have 
		\begin{equation*}
		 \underset{n\rightarrow \infty}{\overline{\lim}} \Big( -\|Q(u_n,v_n)\|^2\Big)= 
			-\|Q(u,v)\|^2 = 	\underset{n\rightarrow \infty}{\underline{\lim}} \Big(-\|Q(u_n,v_n)\|^2\Big).
		\end{equation*}
		Hence, 
		\begin{eqnarray*}
			-J(u,v) &=& \dfrac{1}{2}\Big( \|P(u,v)\|^2-\|Q(u,v)\|^2 \Big) +\varphi (x,u,v)\\ &\le& \underset{n\rightarrow \infty}{\underline{\lim}} \Bigg(\dfrac{1}{2}\Big( \|P(u_n,v_n)\|^2-\|Q(u_n,v_n)\|^2 \Big) +\varphi (x,u_n,v_n)  \Bigg)\\
			&=&  \underset{n\rightarrow \infty}{\underline{\lim}} (-J(u_n,v_n) )\\
			&=& - \underset{n\rightarrow \infty}{\overline{\lim}} J(u_n,v_n)\\
			&\le& -c.
		\end{eqnarray*}
		2. Now, let us show that $\nabla J$ is weakly sequentially continuous. Let $(u_n,v_n) \subset X$ such that \[(u_n,v_n) \rightharpoonup (u,v) \quad \text{in}\quad X.\] We have to show that \[\nabla J(u_n,v_n) \rightharpoonup \nabla J(u,v).\]
		For every $(w,z)\in \mathcal{C}_c^\infty(\mathbb{R}^N)\times \mathcal{C}_c^\infty(\mathbb{R}^N) $, we have 
		
		\begin{equation*}
			\int_{\mathbb{R}^N}\Big(\nabla u_n \nabla z +u_n z +\nabla v_n w +v_n w\Big)dx \to \int_{\mathbb{R}^N}\Big(\nabla u\nabla z +uz +\nabla v \nabla w +vw\Big)dx.
		\end{equation*}
		For every compact subset $\Omega\subset \mathbb{R}^N$ such that $supp \,\{(w,z)\}\subset \Omega \times \Omega$, we have
		\[
		\int_{\mathbb{R}^N}\Big( g_1(x)f_1(v_n) z+g_2(x)f_2(u_n)w\Big)dx = \int_{\Omega}\Big( g_1(x)f_1(v_n) z+g_2(x)f_2(u_n)w\Big)dx.
		\]
		By compact Sobolev embedding theorem, up to a subsequence, since \[(u_n,v_n) \rightharpoonup (u,v)\quad \text{in}\quad \Omega\times \Omega,\] we have
		\begin{equation*}
			(u_n,v_n) \to (u,v) \quad \text{in}\quad L^2(\Omega)\times L^2(\Omega), \quad \text{as}\quad n \to \infty, 
		\end{equation*}
		
		\begin{equation*}
			\Big(u_n(x), v_n(x)\Big) \to \Big(u(x),v(x)\Big), \quad \text{almost everywhere}, \quad \text{in}\quad \Omega\times \Omega, \quad \text{as}\quad n \to \infty.
		\end{equation*}

		The functions $f_i$, $i\in \{1,2\}$ are continuous. So, 
		\begin{equation*}
			f_1(v_n(x))\to f_1(v(x)), \quad \text{almost everywhere}, \quad \text{in}\quad \Omega, \quad \text{as}\quad n \to \infty,
		\end{equation*}
		\begin{equation*}
			f_2(u_n(x))\to f_2(u(x)), \quad \text{almost everywhere}, \quad \text{in}\quad \Omega, \quad \text{as}\quad n \to \infty.
		\end{equation*}
		
		Moreover, by $(\ref{eq 2})$, there exist $c>0$ and $c_0>0$ such that \[|f_i(\zeta)|\le \dfrac{c}{|g_i|_\infty}|\zeta|+c_0,\quad \text{for every} \quad \zeta \in H^1(\mathbb{R}^N).\] It follows that \[g_1(x)f_1(v_n)z\quad\text{and} \quad g_2(x)f_2(u_n)w \quad \text{are in}\quad L^1(\Omega).\] 
		
		The Lebesgue dominated convergence theorem implies that
		\begin{equation*}
			\int_{\Omega}\Big( g_1(x)f_1(v_n) z+g_2(x)f_2(u_n)w\Big)dx \to \int_{\Omega}\Big( g_1(x)f_1(v) z+g_2(x)f_2(u)w\Big)dx, \quad \text{as}\quad n\to \infty.
		\end{equation*}
		
		Thus, by $(\ref{eq 11})$, we have 
		\begin{equation*}
			\Big| J'(u_n,v_n) (w,z)- J'(u,v)(w,z)  \Big| \quad \rightarrow 0, \quad \text{as}\quad n \rightarrow \infty.
		\end{equation*}
		
		Therefore, for every $(w,z)\in \mathcal{C}_c^\infty(\mathbb{R}^N)\times \mathcal{C}_c^\infty(\mathbb{R}^N) $,
		\begin{equation*}
			\langle \nabla J(u_n,v_n), (w,z) \rangle \to \langle \nabla J(u,v), (w,z) \rangle.
		\end{equation*}
		
		Hence, $\nabla J(u_n,v_n) \rightharpoonup \nabla J(u,v)$.
	\end{proof}
	
	The next result shows that the functional $J$ given in $(\ref{eq 10})$ satisfies the geometric assumption $(\ref{eq 7})$ of Theorem $\ref{theorem 2.1}$.
	
	\begin{lemma}
		\label{lem 3.2}
		There exists $\rho>0$ such that 
		$b:= \underset{(u,u)\in X^+}{\inf} \; J(u,u) > a := \underset{(-u,u)\in\partial M}{\sup}\; J(-u,u) $.
	\end{lemma}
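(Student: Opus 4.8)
The plan is to restrict $J$ to the two subspaces $X^+$ and $X^-$, use the sign conditions in $(f)$ and $(g)$ to bound the nonlinear term $\varphi$ from below, and use the asymptotic linearity $(\ref{eq 2})$ to obtain a lower bound for $J$ on $X^+$ that is \emph{uniform in} $\rho$. \textbf{Step 1 (the quadratic part).} First I would apply the splitting identity $(\ref{eq 9})$. For $(u,u)\in X^+$ one has $Q(u,u)=(u,u)$ and $P(u,u)=0$, hence $J(u,u)=\|u\|_1^2-\varphi(x,u,u)$; for $(-u,u)\in X^-$ one has $P(-u,u)=(-u,u)$ and $Q(-u,u)=0$, hence $J(-u,u)=-\|u\|_1^2-\varphi(x,-u,u)$. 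Moreover $\|(u,u)\|^2=2\|u\|_1^2=\|(-u,u)\|^2$, so $(-u,u)\in\partial M$ is equivalent to $\|u\|_1^2=\rho^2/2$.

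\textbf{Step 2 (upper bound for $a$).} From assumption $(f)$, the inequality $tf_i(t)\ge 0$ forces $F_i(t)\ge 0$ for every $t\in\mathbb{R}$ (split $\int_0^t f_i$ according to the sign of $t$ and of $f_i$ there), and since $g_i\ge 0$ by $(g)$, we get $\varphi(x,u,v)\ge 0$ for all $(u,v)\in X$. Therefore, for $(-u,u)\in\partial M$, $J(-u,u)\le -\|u\|_1^2=-\rho^2/2$, so $a\le -\rho^2/2$.

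\textbf{Step 3 (lower bound for $b$, uniform in $\rho$).} Using $(\ref{eq 2})$, the continuity of $f_i$ and $f_i(0)=0$, for every $\varepsilon>0$ there is $M_\varepsilon>0$ with $|f_i(t)|\le\varepsilon|t|+M_\varepsilon$ for all $t$, hence $|F_i(t)|\le\frac{\varepsilon}{2}t^2+M_\varepsilon|t|$. Since $g_i\in L^2(\mathbb{R}^N)\cap L^\infty(\mathbb{R}^N)$ and $g_i\ge 0$, the Cauchy--Schwarz inequality together with $|u|_2\le\|u\|_1$ yields $\varphi(x,u,u)\le\frac{\varepsilon}{2}\big(|g_1|_\infty+|g_2|_\infty\big)\|u\|_1^2+M_\varepsilon\big(|g_1|_2+|g_2|_2\big)\|u\|_1$. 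Fixing $\varepsilon$ small enough that $\frac{\varepsilon}{2}(|g_1|_\infty+|g_2|_\infty)\le\frac12$, we obtain $J(u,u)\ge\frac12\|u\|_1^2-C\|u\|_1\ge-\frac{C^2}{2}$ for every $u\in H^1(\mathbb{R}^N)$, with $C>0$ independent of $\rho$. Hence $-\infty<-\frac{C^2}{2}\le b\le J(0,0)=0$, so $b$ is a finite number not depending on $\rho$.

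\textbf{Step 4 (conclusion).} Since $b$ is finite and independent of $\rho$ whereas $a\le -\rho^2/2$, it suffices to choose $\rho>0$ with $\rho^2>-2b$; then $a\le -\rho^2/2<b$, which is exactly $(\ref{eq 7})$ for $J$. The main obstacle is Step 3: producing a lower bound for $J|_{X^+}$ that is uniform in $\rho$, which is precisely where the asymptotically linear condition $(\ref{eq 2})$ is used in place of an Ambrosetti--Rabinowitz-type condition; once the decomposition $(\ref{eq 9})$ and the sign conditions of $(f)$, $(g)$ are in hand, the remaining steps are routine.
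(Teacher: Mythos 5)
Your proposal is correct and follows essentially the same route as the paper: the sign conditions in $(f)$ and $(g)$ give $\varphi\ge 0$, hence $a\le-\rho^2/2$ on $\partial M$, while the asymptotically linear condition yields a bound $|F_i(t)|\le\tfrac{\varepsilon}{2}t^2+M_\varepsilon|t|$ that produces a lower bound for $J$ on $X^+$ independent of $\rho$, and one then takes $\rho$ large. The only differences are cosmetic (you choose a generic small $\varepsilon$ and make the final choice $\rho^2>-2b$ explicit, whereas the paper fixes $\varepsilon=S/(4|g_i|_\infty)$ and simply says ``for $\rho$ large enough'').
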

	\begin{proof}
		Let $S$ be the optimal constant of Sobolev inequality, that is, 
		\begin{equation}
			\label{eq 12}
			\text{for all}\;\; t \in H^1(\mathbb{R}^N),\; S|t|_2^{2}\le \|t\|_1^{2}.
		\end{equation}
		By $(f)$, there is $c_0>0$ such that for every $(u,v)\in X$,
		\begin{equation}
			\label{eq 13}
			|f_1(v)|\le \dfrac{S}{4|g_1|_\infty}|v|+c_0, \quad |F_1(v)|\le \dfrac{S}{4|g_1|_\infty}|v|^2+c_0|v|,
		\end{equation}
		\begin{equation}
			\label{eq 14}
			|f_2(u)|\le \dfrac{S}{4|g_2|_\infty}|u|+c_0, \quad |F_2(u)|\le \dfrac{S}{4|g_2|_\infty}|u|^2+c_0|u|.
		\end{equation}
		\\
		On $X^+$, by $(\ref{eq 12})$ and Hölder inequality, we have 
		\begin{eqnarray*}
			J(u,u)&=&\dfrac{1}{2}\|Q(u,u)\|^2 -\varphi(x,u,u)\\
			&\ge& \|u\|_1^{2}-\int_{\mathbb{R}^N}|g_1(x)|\Big(\dfrac{Su^2}{4|g_1|_\infty}+c_0|u|\Big)dx -\int_{\mathbb{R}^N}|g_2(x)|\Big(\dfrac{Su^2}{4|g_2|_\infty}+c_0|u|\Big)dx\\
			&\ge& \|u\|_1^{2} - \dfrac{S}{4}|u|_2^2 -c_0 \int_{\mathbb{R}^N}\Big(|g_1(x)|\,|u|\Big)dx -\dfrac{S}{4}|u|_2^2 -c_0\int_{\mathbb{R}^N}\Big(|g_2(x)|\,|u|\Big)dx\\
			&\ge& \|u\|_1^{2} -\dfrac{S}{2}|u|_2^2-c_0|g_1|_2|u|_2 -c_0|g_2|_2|u|_2\\
			&\ge& \|u\|_1^{2} -\dfrac{1}{2}\|u\|_1^{2}-c_0S^{-\frac{1}{2}}\Big(|g_1|_2+|g_2|_2\Big)\|u\|_1\\
			&=&\dfrac{1}{2}\|u\|_1^{2}-c_0S^{-\frac{1}{2}}\Big(|g_1|_2+|g_2|_2\Big)\|u\|_1.
		\end{eqnarray*}
		So, $\underset{ X^+}{\inf}\;J> -\infty$.\\
		
		On $X^-$, since $g_1(x)F_1(v)\ge 0$ and $g_2(x)F_2(u)\ge 0$, we have 
		\begin{eqnarray*}
			J(-u,u) &=& -\dfrac{1}{2}\|P(-u,u)\|^2 -\varphi(x,u,u)\\
			&\le& -\dfrac{1}{2}\|P(-u,u)\|^2 \\
			&=& - \|u\|_1^{2} \\
			&\le& - \|u\|_1^{2} + c_0S^{-\frac{1}{2}}\Big(|g_1|_2+|g_2|_2\Big)\|u\|_1.
		\end{eqnarray*}
	So, for $\rho$ large enough, we obtain the result.
	\end{proof}

	\subsection*{Proof of Theorem $\ref{theorem 3.1}$} The functional $J$ given in $(\ref{eq 10})$, associated with problem $(\ref{eq 1})$, is of $\mathcal{C}^1$ , $\tau-$upper semi-continuous, and its gradient $\nabla J$ is weakly sequentially continuous by Lemma $\ref{lem 3.1}$. Hence, $J$ satisfies assumption $(A)$ of Theorem $\ref{theorem 2.1}$. Moreover, the geometric assumption $(\ref{eq 7})$ of Theorem $\ref{theorem 2.1}$ is fulfilled by Lemma $\ref{lem 3.2}$.\\
	By Theorem $\ref{theorem 2.1}$, for some $c\in \mathbb{R}$, there exists $\Big\{(u_n,v_n)\Big\}\subset X$ such that 
	\begin{equation*}
		J(u_n,v_n) \rightarrow c, \qquad J'(u_n,v_n) \rightarrow 0, \quad \text{as}\quad n\to \infty.
	\end{equation*}
	For $u\in X$, we denote by $u^{\pm}$ the orthogonal projection of $u$ on $X^\pm$.\\
	Let \[\Big(\overline{u}_n,\overline{v}_n\Big) = \Big(u_n^+-u_n^-, v_n^+-v_n^-\Big).\] Then,
	\begin{eqnarray*}
		\|(\overline{u}_n,\overline{v}_n)\|^2 &=&\| (u_n^+-u_n^-, v_n^+-v_n^-)\|^2\\
		&=& \|u_n^+ -u_n^- \|_1^2 + \|v_n^+ -v_n^- \|_1^2\\
		&=& \|u_n^+\|_1^2 +\|u_n^-\|_1^2 + \|v_n^+\|_1^2 +\|v_n^-\|_1^2\\
		&=& \|u_n\|_1^2 +\|v_n\|_1^2\\
		&=& \|(u_n, v_n)\|^2.
	\end{eqnarray*}
	For sufficiently large $n\in \mathbb{N}$, we have $\|J'(u_n,v_n)\| \le 1$. Hence, for such $n$
	\begin{equation*}
		\|(u_n,v_n)\| =\|(\overline{u}_n, \overline{v}_n)\| \ge J'(u_n,v_n)(\overline{u}_n, \overline{v}_n),
	\end{equation*}
	where by $(\ref{eq 11})$,	\begin{multline*}
		J'(u_n,v_n)(\overline{u}_n, \overline{v}_n) = \int_{\mathbb{R}^N} \Big( \nabla u_n \nabla \overline{v}_n + u_n\overline{v}_n \Big) dx + \int_{\mathbb{R}^N} \Big(\nabla v_n  \nabla \overline{u}_n +v_n \overline{u}_n \Big)dx\\ -\int_{\mathbb{R}^N} \Big(g_1(x)f_1(v_n) \overline{v}_n+g_2(x)f_2(u_n)\overline{u}_n\Big)dx. 
	\end{multline*}
	
	On the one hand, we have 
	\begin{multline}
		\label{eq 15}
		\int_{\mathbb{R}^N} \Big( \nabla u_n \nabla \overline{v}_n + u_n\overline{v}_n \Big) dx = \int_{\mathbb{R}^N} \Big( \nabla u_n^+ \nabla v_n^+ + u_n^+ v_n^+ \Big) dx - \int_{\mathbb{R}^N} \Big( \nabla u_n^- \nabla  v_n^- + u_n^- v_n^- \Big) dx \\
		-\int_{\mathbb{R}^N} \Big( \nabla u_n^+ \nabla  v_n^- + u_n^+ v_n^- \Big) dx + \int_{\mathbb{R}^N} \Big( \nabla u_n^- \nabla   v_n^+ + u_n^- v_n^+ \Big) dx,
	\end{multline}
	\begin{multline}
		\label{eq 16}
		\int_{\mathbb{R}^N} \Big(\nabla v_n  \nabla \overline{u}_n +v_n \overline{u}_n \Big)dx = \int_{\mathbb{R}^N} \Big( \nabla v_n^+ \nabla u_n^+ + v_n^+ u_n^+ \Big) dx - \int_{\mathbb{R}^N} \Big( \nabla v_n^- \nabla  u_n^- + v_n^- u_n^- \Big) dx \\
		+\int_{\mathbb{R}^N} \Big( \nabla u_n^+ \nabla  v_n^- + u_n^+ v_n^- \Big) dx - \int_{\mathbb{R}^N} \Big( \nabla u_n^- \nabla   v_n^+ + u_n^- v_n^+ \Big) dx.
	\end{multline}
	Combining equations $(\ref{eq 15})$ and $(\ref{eq 16})$, we get
	\begin{multline}
		\int_{\mathbb{R}^N} \Big( \nabla u_n \nabla \overline{v}_n + u_n\overline{v}_n \Big) dx \\+ \int_{\mathbb{R}^N} \Big(\nabla v_n  \nabla \overline{u}_n +v_n \overline{u}_n \Big)dx = 2 \int_{\mathbb{R}^N} \Big( \nabla u_n^+ \nabla v_n^+ + u_n^+ v_n^+ \Big) dx \\-2 \int_{\mathbb{R}^N} \Big( \nabla u_n^- \nabla  v_n^- + u_n^- v_n^- \Big) dx
	\end{multline}
	But then, by equation $(\ref{eq 9})$, 
	\begin{equation*}
		\int_{\mathbb{R}^N} \Big( \nabla u_n^+ \nabla v_n^+ + u_n^+ v_n^+ \Big) dx = \dfrac{\|(u_n^+, v_n^+)\|^2}{2},\quad \int_{\mathbb{R}^N} \Big( \nabla u_n^- \nabla  v_n^- + u_n^- v_n^- \Big) dx = - \dfrac{\|(u_n^-, v_n^-)\|^2}{2}.
	\end{equation*}
	Il follows that
	\begin{equation}
		\label{eq 18}
		\int_{\mathbb{R}^N} \Big( \nabla u_n \nabla \overline{v}_n + u_n\overline{v}_n \Big) dx + \int_{\mathbb{R}^N} \Big(\nabla v_n  \nabla \overline{u}_n +v_n \overline{u}_n \Big)dx = \|(u_n, v_n)\|^2.
	\end{equation} 
	On the other hand, since $|\overline{v}_n|\le |v_n^+|+|v_n^-|$ and $|v_n^+|_2 +|v_n^-|_2\le 2|v_n|_2$, by $(\ref{eq 13})$, we have 
	\begin{eqnarray*}
		\int_{\mathbb{R}^N} \Big(g_1(x)f_1(v_n) \overline{v}_n\Big) dx &\le& \int_{\mathbb{R}^N} |g_1(x)|\Bigg(\dfrac{S}{4|g_1|_\infty} |v_n| +c_0\Bigg)\Bigg(|v_n^+|+|v_n^-|\Bigg)dx\\
		&\le& \int_{\mathbb{R}^N}\Bigg( \dfrac{S}{4}|v_n|+c_0|g_1(x)|\Bigg)\Bigg(|v_n^+|+|v_n^-|\Bigg)dx\\
		&\le& \Bigg(\dfrac{S}{4}|v_n|_2 +c_0|g_1|_2\Bigg)\Bigg(|v_n^+|_2+|v_n^-|_2\Bigg) \quad \text{(by Hölder inequality)}\\
		&\le& \Bigg(\dfrac{S}{4}|v_n|_2 +c_0|g_1|_2\Bigg)\Bigg(2|v_n|_2\Bigg)\\
		&\le& \dfrac{1}{2}\|v_n\|_1^2 +2c_0|g_1|_2S^{-\frac{1}{2}}\|v_n\|_1\quad \text{(by $(\ref{eq 12})$)}.
	\end{eqnarray*}
	Similarly, we have 
	\begin{equation*}
		\int_{\mathbb{R}^N} \Big(g_2(x)f_2(u_n) \overline{u}_n\Big) dx \le \dfrac{1}{2}\|u_n\|_1^2 +2c_0|g_2|_2S^{-\frac{1}{2}}\|u_n\|_1.
	\end{equation*}
	That is, 
	\begin{multline}
		\label{eq 19}
		\int_{\mathbb{R}^N} \Big(g_1(x)f_1(v_n) \overline{v}_n\Big) dx\\ + \int_{\mathbb{R}^N} \Big(g_2(x)f_2(u_n) \overline{u}_n\Big) dx \le \dfrac{1}{2}\Big(\|u_n\|_1^2 +\|v_n\|_1^2\Big) +2 c_0S^{-\frac{1}{2}}\Big(|g_2|_2\|u_n\|_1 +|g_1|_2\|v_n\|_1\Big)\\
		\le \dfrac{1}{2}\|(u_n,v_n)\|^2 +2 c_0S^{-\frac{1}{2}} \Big(|g_1|_2+|g_2|_2\Big)\|(u_n,v_n)\|.
	\end{multline}
	Equations $(\ref{eq 18})$ and $(\ref{eq 19})$, taken together, yield
	\begin{equation*}
		\|(u_n,v_n)\| \ge \|(u_n,v_n)\|^2- \dfrac{1}{2}\|(u_n,v_n)\|^2 -2 c_0S^{-\frac{1}{2}} \Big(|g_1|_2+|g_2|_2\Big)\|(u_n,v_n)\|.
	\end{equation*}
	
	That is, 
	\begin{equation*}
		\|(u_n,v_n)\| \ge \dfrac{1}{2}\|(u_n,v_n)\|^2 -2 c_0S^{-\frac{1}{2}} \Big(|g_1|_2+|g_2|_2\Big)\|(u_n,v_n)\|.
	\end{equation*}
	
	It follows that $\Big\{(u_n,v_n)\Big\}$ is bounded. There exists a subsequence of $\Big\{(u_n,v_n)\Big\}$, still denoted $\Big\{(u_n,v_n)\Big\}$, and there exists $(u,v)\in X$ such that 
	\begin{equation*}
		(u_n,v_n) \to (u,v) \quad \text{weakly in}\quad X, \quad \text{as}\quad n \to \infty, 
	\end{equation*}
	
	\begin{equation*}
		(u_n,v_n) \to (u,v), \quad \text{in}\quad L_{loc}^2(\mathbb{R}^N) \times L_{loc}^2(\mathbb{R}^N),\quad \text{as}\quad n \to \infty,
	\end{equation*}
	
	\begin{equation*}
		\big(u_n(x), v_n(x)\big) \to \big(u(x),v(x)\big), \quad \text{almost everywhere}, \quad \text{in}\quad \mathbb{R}^N, \quad \text{as}\quad n \to \infty.
	\end{equation*}
	
	We deduce that, for every $(w,z)\in \mathcal{C}_c^\infty(\mathbb{R}^N)\times \mathcal{C}_c^\infty(\mathbb{R}^N) $, 
	\begin{equation*}
		J'(u,v)(w,z)=\lim\limits_{n\to \infty}J'(u_n,v_n)(w,z)=0.
	\end{equation*}
	We conclude that $J'(u,v)=0$. That is, $(u,v)$ is a solution of $(\ref{eq 1})$.\\
	The proof of Theorem $\ref{theorem 3.1}$ is thus complete. \quad \hbox{$\square$}

	\newpage

\end{document}